\newtheorem{theorem}{Theorem}[section]
\newtheorem{corollary}[theorem] {Corollary}
\newtheorem{defin}[theorem]{Definition}
\newtheorem{lemma} [theorem]{Lemma}
\newtheorem{proposition}[theorem]{Proposition}
\newtheorem{rem}[theorem]{Remark}
\theoremstyle{definition}
	\newtheorem{example}[theorem]{Example}
\newtheorem{thm}{Theorem}
\numberwithin{equation}{section}
\begin{document}

\title[Tumura-Clunie Differential Equations with Applications]{Tumura-Clunie Differential Equations with Applications to Linear ODE's}

\author{M. A.~Zemirni and Z.~Latreuch}

\begin{abstract}
In this paper, we study nonlinear differential equations of Tumura-Clunie type, $ f^n + P(z, f) = h, $ where \( n \geq 2 \) is an integer, \( P(z, f) \) is a differential polynomial in \( f \) of degree \( \gamma_P \leq n - 1 \) with small functions as coefficients, and \( h \) is a meromorphic function. Assuming that $ h $ satisfies a linear differential equation of order $ p\le n $ with rational coefficients, we establish a result that classifies the meromorphic solutions \( f \) into two cases based on the distribution of their zeros and poles. This result is then applied to study the zeros and critical points of entire solutions to certain higher-order linear differential equations, thereby extending some known results in the literature.

\medskip
\noindent
\textbf{Keywords.} Linear differential equation, meromorphic functions, Nevanlinna theory, nonlinear differential equation, Tumura-Clunie equation

\medskip
\noindent
\textbf{MSC 2020.} Primary 34M05; Secondary 30D35.
\end{abstract}

\maketitle

\renewcommand{\thefootnote}{}
\footnotetext[1]{Corresponding author: Z.~Latreuch.}


\section{Introduction}

We assume that the reader is familiar with the fundamental results and standard notations of Nevanlinna theory \cite{L,H}, including the notions of order, hyper-order, and convergence exponent of zeros. For instance, a meromorphic function \(\varphi\) is said to be a \textit{small} function with respect to a meromorphic function \( f \) if the Nevanlinna characteristic \( T(r,\varphi) \) of \( \varphi \) satisfies \( T(r,\varphi) = S(r,f) \), where the notation \( S(r,f) \) denotes any quantity satisfying \( S(r,f) = o(T(r,f)) \) as \( r \to +\infty \), possibly outside an exceptional set of such \( r \) of finite linear measure.  

A \textit{differential polynomial} \( P(z, f) \) in  a meromorphic function \( f \) is defined~by
\begin{equation}\label{def:P}
	P(z, f) := \sum_{j=1}^\ell a_{j}\ f^{n_{j0}} (f')^{n_{j1}} \cdots (f^{(k)})^{n_{jk}},
\end{equation}
where the powers \(n_{j0}, n_{j1}, \ldots, n_{jk}\) are non-negative integers and the coefficients \( a_j \) are meromorphic functions.
The \textit{degree} of \( P(z, f) \) is given by
\begin{equation*}
	\gamma_P := \max_{1\le j\le \ell} \sum_{s=0}^{k} n_{js}.
\end{equation*}

In this paper, we consider transcendental meromorphic solutions \( f \) of nonlinear differential equations of \textit{Tumura–Clunie type},
\begin{equation}\label{nde}
	f^n + P(z, f) = h(z),
\end{equation}
where \( n \geq 2 \) is an integer, \( h \) is a meromorphic function, and \( P(z, f) \) is given in \eqref{def:P} with \( 1 \leq \gamma_P \leq n - 1 \) and the coefficients \( a_j \) are small with respect to \( f \).

Equations of the form \eqref{nde} have appeared frequently in the literature in the context of Tumura–Clunie type theorems \cite{MS,R,Y,Yi}, originating from the classical result of Tumura and Clunie \cite{C,T}, which is extended later by Hayman \cite{H}. In the early 2000s, a direct investigation of the solutions of \eqref{nde} was initiated \cite{HKL,Y2}, particularly for the case where $ P(z,f) $ is a linear differential polynomial, i.e., \( \gamma_P = 1 \). Regarding the general form of $ P(z,f) $, several authors \cite{Li2,LYZ,Liao,LLW,YL,Z} have considered the equation \eqref{nde}, but with \( h \) having the specific form
\begin{equation}\label{h}
	h(z) = p_{1}(z) e^{\alpha_{1}(z)} + p_{2}(z) e^{\alpha_{2}(z)},
\end{equation}
where $ p_1$ and $ p_2 $ are small functions with respect to \( f \), $ \alpha_1 $ and $ \alpha_2 $ are non-constant entire functions. Later on, the function $ h $ is assumed to belong to larger classes of meromorphic functions than the class consisting  of the function in \eqref{h}. Essentially, there have been two directions in extending this class.  One approach is to consider that $ h $ is a combination of several exponential terms: 
\begin{eqnarray}\label{ngeq3}
	h(z) = \sum_{j=1}^p p_j(z) e^{\alpha_j(z)},
\end{eqnarray}
where $ p\ge 3 $, \( p_j \) are rational functions and \( \alpha_j \) are non-constant polynomials \cite{LM, CL, FC}. Partial classifications for solutions of \eqref{nde} have been obtained under the assumption \( N(r, f) = S(r, f) \). 
The second approach is to consider that \( h \) is a solution of a second-order linear differential equation with rational coefficients  \cite{HLWZ}. In this case, several asymptotic estimations for the growth of solutions are obtained, and  earlier results related to the form in \eqref{h} are refined.  Theorem~1.3 in \cite{HLWZ}, which is of  main interest in this note, reads as follows: \emph{If \( f \) is a transcendental meromorphic solution to \eqref{nde} and \( h \) satisfies a second-order linear differential equation with rational coefficients, then either \( f \) has only finitely many zeros and poles, or}
\[
T(r, f) \leq \frac{2}{n-\gamma_P} \left( \overline{N}(r,\infty, f)+\overline{N}\left(r, 0,{f}\right)\right)  + N(r,\infty, f)   + S(r, f).
\]

In this note, we explore an additional approach to possibly extend  further the previously mentioned results concerning equations \eqref{nde}. In fact, we consider that $ h $  is a meromorphic solution to a higher-order  linear differential equation,
\begin{equation}\label{odeh}
	h^{(p)} + r_{p-1}(z) h^{(p-1)} + \cdots + r_1(z) h' + r_0(z) h = r_p(z),
\end{equation}
where  \( p \geq 1 \), and \( r_0 \not\equiv 0 , r_1, \dots, r_{p-1}, r_p \) are rational functions. Under this consideration for $ h $, with the additional assumption $ p\le n $, we establish an extension of \cite[Theorem~1.3]{HLWZ}.  Consequently, this extension is used to study the zero and critical points of solutions of linear differential equations
\begin{eqnarray} \label{L}  
	f^{(n)} + p_{n-1}(z)f^{(n-1)} + \ldots + p_1(z)f' + h(z)f = 0,  
\end{eqnarray}  
where \( p_1, \ldots,p_{n-1} \) are polynomials, and \( h \) is a transcendental entire solution of \eqref{odeh} with polynomial coefficients and $ p\le n $.

The main results concerning \eqref{nde} and \eqref{L} are presented in Section~\ref{sec:results}, while their proofs are given in Section~\ref{sec:proofs}.

\section{Results}\label{sec:results}
\subsection{Tumura-Clunie equation.}
We will prove the following theorem, which is in the style of Tumura-Clunie type theorems and extends \cite[Theorem~1.3]{HLWZ}. The integer \( p\) in \eqref{odeh} is taken to be  minimal in the sense that \( h \) does not satisfy any linear differential equation of order \(\leq p - 1\) with rational coefficients. For example, the function
\[
h(z) = e^z + e^{2z} + e^{3z}
\]
satisfies a linear differential equation of order \( p = 3 \) with rational coefficients, yet it does not satisfy any such equation of second order. 

\begin{theorem}\label{t1}
	Let $n\ge p$, $\gamma_P \le n-1$ and let $f$ be a transcendental meromorphic solution of \eqref{nde}, where $h$ is a meromorphic solution of \eqref{odeh}. Then one of the following holds:
	\begin{enumerate}
		\item [(1)]  $f=qe^{\alpha}$, where $q$ is non-zero rational function and $\alpha$ is non-constant polynomial, and we have
		$$
		T(r,h) = nT(r,f) + S(r,f).
		$$
		In particular, if \( r_0, \dots, r_{p-1}\) are entire, then $q$ is constant.
		
		\item [(2)] $ f $ satisfies
		$$
		T(r,f) \le N(r,\infty,f)+\frac{p}{n-\gamma_P} \left(\overline{N}(r,\infty,f) + \overline{N}(r,0,f)\right) + S(r,f).
		$$
	\end{enumerate}
\end{theorem}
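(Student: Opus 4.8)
The plan is to substitute the representation $h = f^n + P(z,f)$ from \eqref{nde} into the linear equation \eqref{odeh} and to separate the contribution of the pure power $f^n$ from that of the lower-degree part. Writing \eqref{odeh} as $\sum_{i=0}^{p} c_i h^{(i)} = r_p$ with $c_p = 1$ and $c_i = r_i$ for $i<p$, and using $(f^n)^{(i)} = f^n W_i$ with $W_i := (f^n)^{(i)}/f^n$, I obtain the fundamental identity
\[
f^n \Phi + B = r_p, \qquad \Phi := \sum_{i=0}^{p} c_i W_i, \quad B := \sum_{i=0}^{p} c_i P^{(i)}.
\]
Here $\Phi$ is built from logarithmic-derivative-type terms of $f^n$, so that $m(r,\Phi) = S(r,f)$; its poles can occur only at the zeros and poles of $f$ and at the finitely many poles of the $c_i$, each of order at most $p$, whence $N(r,\Phi) \le p\bigl(\overline{N}(r,0,f) + \overline{N}(r,\infty,f)\bigr) + S(r,f)$. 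Likewise $B$ is a differential polynomial in $f$ of degree $\le \gamma_P$ with small coefficients, so $m(r,B) \le \gamma_P\, m(r,f) + S(r,f)$. The proof then splits according to whether $\Phi$ vanishes identically.

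If $\Phi \not\equiv 0$, I will derive case (2). Since $f$ is transcendental the identity forces $r_p - B \not\equiv 0$, and it rewrites as $f^n/(r_p - B) = 1/\Phi$, so that
\[
n\, m(r,f) = m(r,f^n) \le m\!\left(r, \tfrac{1}{\Phi}\right) + m(r, r_p - B) + S(r,f) \le T(r,\Phi) + \gamma_P\, m(r,f) + S(r,f).
\]
Inserting the bound on $T(r,\Phi) = m(r,\Phi) + N(r,\Phi)$ and rearranging yields $(n-\gamma_P)\,m(r,f) \le p\bigl(\overline{N}(r,0,f) + \overline{N}(r,\infty,f)\bigr) + S(r,f)$; adding $N(r,\infty,f)$ to restore $T(r,f) = m(r,f) + N(r,\infty,f)$ gives exactly the inequality in (2).

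If $\Phi \equiv 0$, then $g := f^n$ satisfies the homogeneous equation $g^{(p)} + r_{p-1}g^{(p-1)} + \cdots + r_0 g = 0$ with rational coefficients, and I will show this forces $f = q e^{\alpha}$. This is the step I expect to be the crux, and it is where the hypothesis $p \le n$ is used decisively. At every point that is not a pole of some $r_i$ (an \emph{ordinary} point of the equation) all solutions are analytic and, since the equation is monic, any nontrivial solution vanishes there to order at most $p-1$; but $g = f^n$ vanishes to order at least $n \ge p$ at each zero of $f$ and has no poles at ordinary points, so $g$ can have neither zeros nor poles outside the finite set of singular points. Hence $f$ has finitely many zeros and poles, so $f = R\, e^{\beta}$ with $R$ rational and $\beta$ entire. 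Because meromorphic solutions of a linear differential equation with rational coefficients are of finite order, $g = f^n$ has finite order, which forces $\beta = \alpha$ to be a polynomial; as $f$ is transcendental, $\alpha$ is non-constant, and we may take $q = R$. The relation $T(r,h) = nT(r,f) + S(r,f)$ then follows by writing $h = f^n + P$ as an exponential sum $\sum_{d} C_d e^{d\alpha}$ with distinct integer frequencies $d \le n$, small coefficients $C_d$, and non-vanishing top coefficient $C_n = q^n$, so that the characteristic is governed by the highest frequency. Finally, if $r_0,\dots,r_{p-1}$ are entire the equation has no finite singular points, so $g$ has no zeros or poles at all; thus $f$ is entire and zero-free, and $q$ reduces to a constant.
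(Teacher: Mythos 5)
Your proof is correct, and the main case is handled by a genuinely different, more elementary route than the paper's. Your degenerate case ($\Phi\equiv 0$) is essentially the paper's Case~1: your ordinary-point analysis of the monic equation $L(f^n)=0$ (no poles, zeros of order at most $p-1$ away from poles of the coefficients) is exactly the content of the paper's Lemma~\ref{zerospoles}, and your ``highest frequency dominates'' step is the paper's appeal to Mohon'ko's lemma; both proofs also invoke, without proof, the classical fact that meromorphic solutions of linear ODEs with rational coefficients have finite order. The divergence is in the case $\Phi\not\equiv 0$. The paper multiplies your $\Phi$ by $f^{j}$ with $j=n-\gamma_P$ (its $\varphi_j$ in \eqref{p2}), applies Clunie's lemma (Lemma~\ref{Clunie}) to $f^{n-j}\varphi_j=Q(z,f)$ to get $m(r,\varphi_j)=S(r,f)$, and then estimates $m(r,1/f)$ through the first main theorem; this forces the delicate bookkeeping in \eqref{p5}--\eqref{p6}, where poles of $\varphi_j$ at zeros of $f$ of small multiplicity and zeros of $\varphi_j$ at zeros of $f$ of large multiplicity are tracked via truncated counting functions. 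You instead bound $m(r,f)$ directly from $f^n=(r_p-B)/\Phi$: the logarithmic derivative lemma alone gives $m(r,\Phi)=S(r,f)$ (no Clunie needed, since $\Phi$ is literally a combination of terms $(f^n)^{(i)}/f^n$ with rational coefficients), the crude pole bound gives $N(r,\Phi)\le p\bigl(\overline{N}(r,0,f)+\overline{N}(r,\infty,f)\bigr)+S(r,f)$, and the degree estimate $m(r,B)\le\gamma_P\,m(r,f)+S(r,f)$ follows by writing each monomial of $B$ as a power of $f$ times logarithmic-derivative factors. Both routes end at the same inequality, so nothing is lost; yours is shorter, with fewer places to go wrong, while the paper's parameter $j$ and exact multiplicity cancellations are simply not needed for the stated bound. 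Two details you should make explicit in a final write-up: at a point where a zero or pole of $f$ coincides with a pole of some $c_i$, the pole of $\Phi$ may have order exceeding $p$, but since the $c_i$ are rational there are only finitely many such points and their contribution is $O(\log r)$, hence absorbed in $S(r,f)$; and in passing from $m(r,1/\Phi)$ to $T(r,\Phi)$ you are using the first main theorem, which is fine but deserves a word, since that is the only place where the pole counting of $\Phi$ enters.
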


\begin{rem}\label{rem:finitecoeff}
	If the coefficients of $ P(z,f) $ are all rational functions, then $ S(r,f) $ in Theorem~\ref{t1}(1) is $ O(\log r) $ without exceptional set. In addition, if $ \rho(f)<+\infty $, then $ S(r,f) $ in Theorem~\ref{t1}(2) is $ O(\log r) $ without exceptional set.
\end{rem}

Notice that exponential combinations in \eqref{ngeq3} are indeed  meromorphic solutions to equations of the form \eqref{odeh}. In contrast to the functions in \eqref{ngeq3}, the meromorphic solutions of \eqref{odeh} may have non-integer order.  Therefore, the results obtained in \cite{LM, CL, FC} might be extended to all meromorphic solutions, rather than only those with a few poles. 

The proof of Theorem~\ref{t1} is different from that of Theorem~1.3 in \cite{HLWZ}. For instance, in the case $ p=2 $, our proof simplifies the one in \cite{HLWZ}. 

Theorem~\ref{t1} can also be stated in terms of deficiencies:
\begin{equation*}\label{def}
	\delta(a,f):= 1- \limsup_{r\to\infty}\frac{N(r,a,f)}{T(r,f)},\quad \Theta(a,f):= 1- \limsup_{r\to\infty}\frac{\overline{N}(r,a,f)}{T(r,f)},
\end{equation*}
where $a\in \widehat{\mathbb{C}}$, and we have,
\begin{corollary}\label{coro}
	Let $n\ge p$, $\gamma_P \le n-1$ and let $f$ be a transcendental meromorphic solution of \eqref{nde}, where $h$ is a meromorphic solution of \eqref{odeh}. If $ f $ is not of the form $ qe^\alpha $, where $ q $ is non-zero rational and $ \alpha $ is non-constant polynomial, then 
	\begin{equation}\label{cond}
		\Theta(0, f)+\Theta(\infty, f)+\frac{n-\gamma_P}{p} \delta(\infty, f) \le 2.
	\end{equation}
\end{corollary}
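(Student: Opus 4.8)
The plan is to obtain Corollary~\ref{coro} as an immediate consequence of Theorem~\ref{t1}, by normalizing the characteristic inequality in alternative~(2) and translating it into the language of deficiencies. First I would observe that the hypothesis excludes alternative~(1): since $f$ is assumed not to have the form $qe^{\alpha}$ with $q$ a non-zero rational function and $\alpha$ a non-constant polynomial, the conclusion of Theorem~\ref{t1}(1) fails, and therefore Theorem~\ref{t1}(2) must hold. Thus
\[
T(r,f) \le N(r,\infty,f)+\frac{p}{n-\gamma_P} \left(\overline{N}(r,\infty,f) + \overline{N}(r,0,f)\right) + S(r,f)
\]
for all $r$ outside an exceptional set $E$ of finite linear measure.

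Next I would divide through by $T(r,f)$. This is legitimate for large $r$ because $f$ is transcendental, so that $T(r,f)\to\infty$ and the error term satisfies $S(r,f)/T(r,f)\to 0$ as $r\to\infty$ with $r\notin E$. Writing $k:=p/(n-\gamma_P)$, which is positive since $\gamma_P\le n-1$ and $p\ge 1$, I then pass to the upper limit as $r\to\infty$. Here the only point needing care is the direction of the estimates: replacing the $\limsup$ taken over $r\notin E$ by the $\limsup$ over all $r$ can only enlarge it, and the $\limsup$ of a sum is at most the sum of the $\limsup$'s; both effects push the inequality the correct way. Using
\[
\limsup_{r\to\infty}\frac{N(r,\infty,f)}{T(r,f)} = 1-\delta(\infty,f), \qquad \limsup_{r\to\infty}\frac{\overline{N}(r,a,f)}{T(r,f)} = 1-\Theta(a,f)\quad (a\in\{0,\infty\}),
\]
I arrive at $1 \le \big(1-\delta(\infty,f)\big) + k\big[(1-\Theta(\infty,f))+(1-\Theta(0,f))\big]$.

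Finally I would rearrange: isolating $\delta(\infty,f)$ gives $\delta(\infty,f)\le k\,[\,2-\Theta(\infty,f)-\Theta(0,f)\,]$, and dividing by $k$ and substituting $1/k=(n-\gamma_P)/p$ yields precisely \eqref{cond}. I do not expect any genuine obstacle here, since the statement is a direct reformulation of Theorem~\ref{t1}(2); the argument is pure bookkeeping, and the single delicate ingredient is keeping the subadditivity of $\limsup$ oriented so that the resulting inequality is not accidentally reversed. No additional regularity assumption on $T(r,f)$ is required, precisely because the deficiencies are defined via $\limsup$ and the exceptional set has finite linear measure.
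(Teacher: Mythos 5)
Your proposal is correct and is essentially the paper's argument in contrapositive form: the paper assumes \eqref{cond} fails, converts that into the pointwise bound $\frac{p}{n-\gamma_P}\bigl(\overline{N}(r,0,f)+\overline{N}(r,\infty,f)\bigr)+N(r,\infty,f)\le(1-\varepsilon)T(r,f)$ for large $r$, deduces $T(r,f)=S(r,f)$ from case (2) of Theorem~\ref{t1}, and so forces case (1), contradicting the hypothesis; you instead rule out case (1) first and read \eqref{cond} off case (2) by dividing by $T(r,f)$ and taking upper limits. The bookkeeping in your version (subadditivity of $\limsup$ and enlarging the $\limsup$ from $r\notin E$ to all $r$) is handled in the right direction, so there is no gap.
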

\begin{proof}
	Assume the contrary of \eqref{cond}. Then for any $\varepsilon\in (0,1)$ sufficiently small, there is an $R>0$ such that 
	$$
	\frac{p}{n-\gamma_P}\left( \overline{N}(r,0,f) +\overline{N} (r,\infty,f) \right)+ N(r,\infty, f)   \le (1-\varepsilon) T(r,f),\quad r>R.
	$$
	This together with the case (2) of Theorem~\ref{t1} gives $T(r,f)=S(r,f)$, which is absurd. Thus, the case (2) in Theorem~\ref{t1} cannot hold, which means that the only possible situation is the case (1) of Theorem~\ref{t1}, that is, $ f $ is of the form $ qe^\alpha $, which is a contradiction.
\end{proof}

For the particular interest when $h$ is rational or has non-integer order, Theorem~\ref{t1} shows also that: \textit{if $h$ is either a rational or transcendental with $ \rho(h) \not\in \mathbb{N} $, then \eqref{cond} holds.}

%


The following example  illustrates Theorem~\ref{t1} and Corollary~\ref{coro} when $h$ satisfies $3^{\text{rd}}$ order ODE.
\begin{example}\label{sharpness-of-cor} 
	The meromorphic function $f(z)=e^{2z}/(e^z-1)$ has no zeros and it satisfies
	\begin{eqnarray*}
		T(r,f) &=& 2r/\pi + O(1),\\
		\overline{N}(r,\infty,f) &=& N(r,\infty,f) = r /\pi + O(1).
	\end{eqnarray*}
	Thus $\Theta(\infty,f)=\delta(\infty,f)= 1/2$ and $\Theta(0,f)=1$. Moreover, $f$ solves the
	equation
	$$
	f^4+\frac{1}{6}f'''-3 f'' + \frac{107}{6}f'-35 f = e^{2 z}\left(e^{2 z}+4 e^{z}+10\right).
	$$
	Here, $h(z)= e^{2 z}\left(e^{2 z}+4 e^{z}+10\right)$ is a solution of the  equation $h'''-9h''+26h'-24h=0$.
	In this case, we have $\Theta(0,f)+\Theta(\infty,f)+ \frac{4-1}{3}\delta(\infty,f)= 2$. This shows the sharpness of \eqref{cond}.
\end{example}

\subsection{Applications to linear differential equations.}

This section is devoted to studying linear differential equations given in \eqref{L}.

It is known that every nontrivial solution of \eqref{L} is transcendental entire function, and it is shown in \cite[Theorem~2.3]{HYZ} that  every such solution satisfies 
\begin{equation*}
	\log T(r,f) \asymp \log M(r,h),\quad r\notin E, 
\end{equation*}
where $ E \subset [0,+\infty)$ is a set of finite linear measure.  In particular, we have $ \rho_2(f) = \rho(h) $.

It was shown in \cite[Theorem 6]{HILT} that if \( f \) is a nontrivial solution of  
\begin{equation}\label{Eq}
	f'' + h(z) f = 0,
\end{equation}
where \( h \) is a transcendental entire function satisfying \( \overline{N}(r, 0, h) \not= S(r, h) \) and  
\begin{equation*}
	K\, \overline{N}(r, 0, h) \leq T(r, h) + S(r, h), \quad K > 2,
\end{equation*}
then \( f \) satisfies
\begin{equation*}
	\limsup_{r \to \infty} \frac{\overline{N}\left(r, 0, f f'\right)}{\overline{N}\left(r, 0, h\right)} \geq \frac{K - 2}{2} > 0.
\end{equation*}
In particular, 
\[
\rho(h) \leq \max \left\{ \overline{\lambda}(f), \, \overline{\lambda}(f') \right\}.
\]
Motivated by this result, one may ask whether a similar conclusion can be drawn regarding the zeros and critical points of solutions to the equation \eqref{L}.  
In fact, by applying Theorem~\ref{t1}, we prove the following result, which gives another general perspective to \cite[Theorem 6]{HILT}.

\begin{theorem}\label{Osc}
	Let $ f $ be a nontrivial solution of \eqref{L}.
	Then one of the following holds:
	\begin{enumerate}
		\item[(1)] 	$ f = e^{G} $, where $ G' = e^{\alpha} $ and
			\( \alpha \) is a polynomial with $\deg\alpha=\rho(h)$.
			
			\item[(2)] $ f $ satisfies
			\begin{eqnarray}\label{res}
				T(r, h) \leq np \, \overline{N}\left( r,0,{ff'} \right)+ o \left(\overline{N}\left( r, 0,{f} \right)\right)  + o\left(T(r,h)\right),
			\end{eqnarray}
			outside an exceptional set of finite linear measure; and if  $\overline{\lambda}(f) < +\infty $, then 
			\begin{equation}\label{res1}
				T(r,h) \le np \overline{N}(r,0,ff') + O(\log r).
			\end{equation}
		\end{enumerate}
	\end{theorem}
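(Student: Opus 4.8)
The plan is to turn the linear equation \eqref{L} into a Tumura--Clunie equation for the logarithmic derivative $g:=f'/f$ and then apply Theorem~\ref{t1}. Dividing \eqref{L} by $f$ and expanding each $f^{(k)}/f$ as a differential polynomial in $g$ of degree $k$ with leading term $g^{k}$, I obtain
\[
g^{n}+P(z,g)=-h ,
\]
where $P$ is a differential polynomial in $g$ whose coefficients are polynomials (hence small with respect to the transcendental $g$) and whose degree is $\gamma_P=n-1$, the top term $\binom{n}{2}g^{n-2}g'$ already occurring in $f^{(n)}/f$. Here $-h$ again satisfies an equation of the form \eqref{odeh} of order $p\le n$ with rational coefficients, and $g$ is transcendental, since if $g$ were rational then $g^{n}+P(z,g)$ would be rational, contradicting that $h$ is transcendental. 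Thus Theorem~\ref{t1} applies with $n\ge p$ and $\gamma_P=n-1$, so that $n-\gamma_P=1$ and $\tfrac{p}{\,n-\gamma_P\,}=p$.

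Suppose first that alternative (1) of Theorem~\ref{t1} holds, so $g=qe^{\alpha}$ with $q$ rational and $\alpha$ a non-constant polynomial. Since the coefficients of $P$ are polynomials, hence entire, the ``In particular'' clause forces $q$ to be a constant $c$; writing $ce^{\alpha}=e^{\tilde\alpha}$ with $\deg\tilde\alpha=\deg\alpha$, the relation $f'/f=e^{\tilde\alpha}$ integrates to $f=e^{G}$ with $G'=e^{\tilde\alpha}$. Finally $T(r,h)=nT(r,g)+S(r,g)$ together with $T(r,g)=T(r,e^{\alpha})+O(1)$ give $\rho(h)=\deg\tilde\alpha$, which is exactly conclusion~(1).

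The crucial case is alternative (2). The first step is the estimate
\[
T(r,h)=m(r,h)\le n\,m(r,g)+S(r,g),
\]
which holds because $h$ is \emph{entire}. To see it I split the circle $|z|=r$: on $\{|g|\ge1\}$ each factor $g^{\gamma_j-n}$ of $P/g^{n}$ is bounded (as $\gamma_j\le n-1$), while on $\{|g|<1\}$ one has $|h|\le1+|P|$ and $|g^{(i)}|\le|g^{(i)}/g|$; in both regions $\log^{+}|h|\le n\log^{+}|g|+\sum_i\log^{+}|g^{(i)}/g|+O(1)$, and integration together with the lemma on the logarithmic derivative yields the bound. I then feed in Theorem~\ref{t1}(2): subtracting $N(r,\infty,g)$ from both sides there and using $m(r,g)=T(r,g)-N(r,\infty,g)$ gives $m(r,g)\le p\bigl(\overline{N}(r,\infty,g)+\overline{N}(r,0,g)\bigr)+S(r,g)$. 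Because $g=f'/f$ has only simple poles one has $N(r,\infty,g)=\overline{N}(r,\infty,g)=\overline{N}(r,0,f)$, and the zeros of $g$ are precisely the critical points of $f$ lying off its zeros, so $\overline{N}(r,\infty,g)+\overline{N}(r,0,g)=\overline{N}(r,0,ff')$. Combining,
\[
T(r,h)\le n\,m(r,g)+S(r,g)\le np\,\overline{N}(r,0,ff')+S(r,g).
\]

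It remains to control the error. From $g^{n}=-h-P$ one gets $m(r,g)\le T(r,h)+S(r,g)$; combined with $T(r,g)=\overline{N}(r,0,f)+m(r,g)$ this shows $T(r,g)=O\!\left(\overline{N}(r,0,f)+T(r,h)\right)$, whence $S(r,g)=o\!\left(T(r,g)\right)=o\!\left(\overline{N}(r,0,f)\right)+o\!\left(T(r,h)\right)$, which is \eqref{res}. If in addition $\overline{\lambda}(f)<+\infty$, the same two inequalities force $T(r,g)$ to have finite order, so by Remark~\ref{rem:finitecoeff} and the finite-order estimate for logarithmic derivatives every $S(r,g)$ above becomes $O(\log r)$ with no exceptional set, giving \eqref{res1}. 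I expect the delicate point to be the entire-$h$ estimate $T(r,h)\le n\,m(r,g)+S(r,g)$: it is essential to run the argument through the proximity function $m(r,g)=T(r,g)-N(r,\infty,g)$ rather than through $T(r,g)$, for otherwise the pole contribution $N(r,\infty,g)=\overline{N}(r,0,f)$ would be carried along and would inflate the constant from $np$ to $n(p+1)$.
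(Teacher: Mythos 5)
Your proposal is correct and follows essentially the same route as the paper: reduce \eqref{L} to the Tumura--Clunie equation $g^n+Q(z,g)=-h$ for $g=f'/f$ with $\gamma_Q=n-1$, apply Theorem~\ref{t1} (the entire-coefficient clause absorbing $q$ into the exponential in case (1)), and in case (2) combine $m(r,h)\le n\,m(r,g)+S(r,g)$, the bound $m(r,g)\le p\,\overline{N}(r,0,ff')+S(r,g)$ obtained by subtracting $N(r,\infty,g)$, and $T(r,g)\le T(r,h)+\overline{N}(r,0,f)+S(r,g)$ to control the error terms, exactly as in the paper's proof. The only differences are presentational: you verify in detail the estimate \eqref{F1} and the transcendence of $g$, which the paper asserts without proof.
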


	The idea of using Tumura-Clunie equation to study the solutions of the equation \eqref{Eq} is used in \cite{Z} for  $ h $ having the form $ b_1(z) e^{p_1(z)} + b_2(z) e^{p_2(z)} +b_3$, where $ p_1 $ and $ p_2 $ are non-constant polynomials having the same degree, and $ b_1,b_2 $ and $ b_3 $ are polynomials, and the solutions $ f $ having few zeros in the sense $ \lambda(f)<\deg(p_1) $.
	
	
	The following proposition reveals additional information when \( h \) satisfies a non-homogeneous linear differential equation \eqref{odeh}.

	\begin{proposition}\label{Pro}
		Let \( h \) be a transcendental solution of \eqref{odeh} with \( r_p \not\equiv 0 \). Then \( \overline{N}(r, 0, h) \neq S(r, h) \), and  Theorem~\ref{Osc}(1) does not occur.
	\end{proposition}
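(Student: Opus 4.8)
The plan is to prove the two assertions separately, each resting on the hypothesis $r_p\not\equiv0$.

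For the first assertion, $\overline{N}(r,0,h)\neq S(r,h)$, I would begin by bounding the multiplicities of the zeros of $h$. Let $z_0$ be a zero of $h$ of multiplicity $m$ that is neither a pole of any $r_j$ nor a zero of $r_0$; since the $r_j$ are rational and $r_0\not\equiv0$, all but finitely many zeros of $h$ are of this type. If $m>p$, then each $h^{(j)}$ with $0\le j\le p$ vanishes at $z_0$ to order $\ge m-p\ge1$, so the left-hand side of \eqref{odeh} vanishes at $z_0$ and hence $r_p(z_0)=0$. As $r_p\not\equiv0$ is rational, this occurs only finitely often, so all but finitely many zeros of $h$ have multiplicity at most $p$, giving
$$N(r,0,h)\le p\,\overline{N}(r,0,h)+O(\log r).$$

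Next I would extract a lower bound for $N(r,0,h)$ from \eqref{odeh} itself. Dividing by $h$ gives
$$\frac{r_p}{h}=\frac{h^{(p)}}{h}+r_{p-1}\frac{h^{(p-1)}}{h}+\cdots+r_1\frac{h'}{h}+r_0,$$
so by the lemma on the logarithmic derivative together with $m(r,r_j)=O(\log r)=S(r,h)$ one gets $m(r,r_p/h)=S(r,h)$. Since $r_p$ is rational, $T(r,r_p/h)=T(r,h)+O(\log r)$, whence $N(r,r_p/h)=T(r,h)+S(r,h)$. As the poles of $r_p/h$ occur only at zeros of $h$ (up to the finitely many poles of $r_p$), we have $N(r,r_p/h)\le N(r,0,h)+O(\log r)$. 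Combining these with the multiplicity bound yields
$$T(r,h)\le p\,\overline{N}(r,0,h)+S(r,h),$$
and since $h$ is transcendental (so $T(r,h)\neq S(r,h)$) this forces $\overline{N}(r,0,h)\neq S(r,h)$.

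For the second assertion, suppose toward a contradiction that Theorem~\ref{Osc}(1) holds, i.e.\ $f=e^{G}$ with $G'=e^{\alpha}$ and $\alpha$ a non-constant polynomial (non-constant because $\deg\alpha=\rho(h)\ge1$). Setting $w=e^{\alpha}$ and using the recursion $f^{(j+1)}/f=(f^{(j)}/f)'+(f^{(j)}/f)(f'/f)$ together with $w'=\alpha'w$, an induction shows each $f^{(j)}/f$ equals $\sum_{i=1}^{j}c_{ji}\,e^{i\alpha}$ with polynomial coefficients $c_{ji}$ and no constant term. Substituting into \eqref{L}, rewritten as $h=-\left(f^{(n)}+p_{n-1}f^{(n-1)}+\cdots+p_1f'\right)/f$, gives
$$h=\sum_{i=1}^{n}d_i(z)\,e^{i\alpha},$$
with polynomials $d_i$ and no $e^{0\cdot\alpha}$ term. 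Applying the operator in \eqref{odeh} to this expression keeps each $h^{(k)}$ of the form $\sum_i(\text{polynomial})\,e^{i\alpha}$, so \eqref{odeh} becomes $\sum_{i=1}^{n}R_i(z)\,e^{i\alpha}=r_p$ with rational $R_i$. Rearranging, $-r_p+\sum_{i=1}^{n}R_i\,e^{i\alpha}\equiv0$ is a linear relation among $1,e^{\alpha},\dots,e^{n\alpha}$ whose exponents are pairwise distinct with non-constant differences; by the linear independence of these exponentials over the field of rational functions (Borel's theorem), every coefficient vanishes, in particular $r_p\equiv0$, contradicting the hypothesis.

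The Nevanlinna estimates and the exponential-sum expansion are routine. The two points requiring care are the multiplicity bound for the zeros of $h$, where $r_p\not\equiv0$ is precisely what prevents $h$ from carrying zeros of arbitrarily high multiplicity, and the appeal to linear independence of $\{e^{i\alpha}\}$ in the second part, which is again exactly where $r_p\not\equiv0$ becomes decisive.
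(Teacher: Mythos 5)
Your proposal is correct and follows essentially the same route as the paper: for the first assertion, divide \eqref{odeh} by $h$ and use the logarithmic derivative lemma to get $m(r,1/h)=S(r,h)$, then invoke the first main theorem; for the second, substitute $f=e^G$ into \eqref{L} to write $h$ as an exponential sum $\sum_{i\ge 1} d_i e^{i\alpha}$, plug this into \eqref{odeh}, and apply Borel's theorem to force $r_p\equiv 0$. Your explicit multiplicity bound $N(r,0,h)\le p\,\overline{N}(r,0,h)+O(\log r)$ is a welcome addition: the paper's proof passes from the hypothesis on $\overline{N}(r,0,h)$ to a bound on $T(r,h)$ without spelling out this step, which is exactly what justifies it.
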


	\begin{proof}
		Assume, for contradiction, that \( \overline{N}(r, 0, h) = S(r, h) \).  Since \( r_p \not\equiv 0 \), we can write \eqref{odeh} as  
		\[
		\frac{1}{h} = \frac{1}{r_p(z)} \left( \frac{h^{(p)}}{h} + \sum_{j=0}^{p-1} r_j(z) \frac{h^{(j)}}{h} \right),
		\]  
		which implies \( m(r, 1/h) = O(\log r) \). Combined with the assumption \( \overline{N}(r, 0, h) = S(r, h) \), this leads to a contradiction, namely \( T(r, h) = S(r, h) \), which is impossible since \( h \) is transcendental.
		
		Next, suppose, again for contradiction, that \( f \) satisfies Case~(1) of Theorem~\ref{Osc}, that is, \( f = e^G \), where \( G' = e^\alpha \) and \( \alpha \) is a polynomial. Substituting \( f = e^G \) into equation \eqref{L} gives  
		\[
		h(z) = -e^{n\alpha(z)} + \sum_{j=1}^{n-1} \beta_j(z) e^{j\alpha(z)},
		\]  
		for some polynomials \( \beta_j \). Substituting this expression for \( h \) into \eqref{odeh} and applying Borel's theorem \cite[Theorem 1.51]{YY} leads to a contradiction, since \( r_p \not\equiv 0 \).
	\end{proof}
	

	The following two examples illustrate the both cases of Theorem~\ref{Osc}. 
	\begin{example}  
		The function \( f(z) = e^{e^z} \)  solves the differential equation  
		\[
		f'' - (e^z + e^{2z})f = 0,  
		\]  
		where \( h(z) = -(e^z + e^{2z}) \) satisfies the auxiliary equation  
		\[
		h'' - 3h' + 2h = 0.
		\]  
	\end{example}  
	
	\begin{example}[\cite{BLL}]\label{Ex2}
		For any \( \gamma \in \mathbb{C} \setminus \{0\} \), the differential equation  
		\[
		f^{\prime \prime} - \left(\gamma^2 e^z - \frac{\gamma}{2} e^{z / 2} + \frac{1}{4}\right) f = 0
		\]  
		admits two linearly independent solutions.
		\[
		f_1(z) = \exp\left(2 \gamma e^{z / 2} - z / 2\right), \quad f_2(z) = \left(4 \gamma e^{z / 2} + 1\right) \exp\left(-2 \gamma e^{z / 2} - z / 2\right).
		\]
		Here, the coefficient \( h(z) = -\left(\gamma^2 e^z - \frac{\gamma}{2} e^{z/2} + \frac{1}{4}\right) \) solves the ODE
		\[
		h'' - \frac{3}{4}h' + \frac{1}{4}h = \frac{1}{16},
		\]
		and satisfies 
		\[
		T(r, h) = \frac{r}{\pi} + o(r), \quad \text{as } r \to \infty.
		\]
		Additionally, we have
		\[
		\begin{aligned}  
			N\left(r, 0,f_1\right) &= 0, & N\left(r, 0,f_1'\right) &= \frac{r}{2\pi} + o(r), \quad r \to \infty, \\  
			N\left(r, 0,f_2\right) &= \frac{r}{2\pi} + o(r), & N\left(r, 0,f_2'\right) &= \frac{r}{\pi} + o(r), \quad r \to \infty.  
		\end{aligned}  
		\]  
		This clearly shows that \( f_1 \) and \( f_2 \) satisfy the inequality \eqref{res}.  
	\end{example}


	\begin{corollary}\label{cor}
		Under the hypotheses of Theorem~\ref{Osc}, and assuming \( r_p \not\equiv 0 \), we have: If \( \overline{\lambda}(f) < \rho(h) \), then 
		\begin{eqnarray}\label{res2}
			T(r, h) \leq np \overline{N}\left( r, 0,f' \right)  + S(r,h).
		\end{eqnarray} Moreover, \( f \) takes the form \( f = \pi e^g \), where \( \pi \) and $g$ are entire function such that \( \rho(\pi) < \rho(g) \), and
		\begin{eqnarray}\label{res3}
			T(r, g') \leq n\overline{N}\left( r, 0,g'\right) + S(r, g').
		\end{eqnarray}
	\end{corollary}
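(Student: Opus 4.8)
The plan is to derive \eqref{res2} from Theorem~\ref{Osc} and then obtain the finer statement about $g'$ by running Theorem~\ref{t1} a second time, on a Tumura--Clunie equation satisfied by $g'$. First, since $r_p\not\equiv0$, Proposition~\ref{Pro} excludes Theorem~\ref{Osc}(1), so $f$ must satisfy Theorem~\ref{Osc}(2); and because $\overline{\lambda}(f)<\rho(h)<+\infty$ I may invoke the sharper inequality \eqref{res1}, i.e. $T(r,h)\le np\,\overline{N}(r,0,ff')+O(\log r)$. Bounding $\overline{N}(r,0,ff')\le\overline{N}(r,0,f)+\overline{N}(r,0,f')$ and noting that $\overline{\lambda}(f)<\rho(h)$ forces $\overline{N}(r,0,f)=S(r,h)$ (the zero-counting function has order $\overline{\lambda}(f)$, strictly below the order $\rho(h)$ of $T(r,h)$, whose regular growth is guaranteed by $h$ solving \eqref{odeh}), one absorbs the $\overline{N}(r,0,f)$ term into $S(r,h)$ and reaches \eqref{res2}.

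For the factorization I would first observe that every zero of a solution of \eqref{L} has multiplicity at most $n-1$ (else uniqueness forces $f\equiv0$), so $\lambda(f)=\overline{\lambda}(f)<+\infty$, and the Weierstrass--Hadamard factorization gives $f=\pi e^{g}$ with $\pi$ the canonical product over the zeros of $f$ and $g$ entire, where $\rho(\pi)=\lambda(f)=\overline{\lambda}(f)$. Since $\rho_2(\pi)=0$ whereas $\rho_2(e^{g})=\rho(g)$, the relation $\rho_2(f)=\rho(h)$ recorded in the excerpt yields $\rho(g)=\rho(h)$, whence $\rho(\pi)=\overline{\lambda}(f)<\rho(h)=\rho(g)$, which is the claimed inequality $\rho(\pi)<\rho(g)$.

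To prove \eqref{res3} I substitute $f=\pi e^{g}$ into \eqref{L} and divide by $f$. Expanding each $f^{(k)}/f$ through the logarithmic derivatives $\pi^{(i)}/\pi$ and the Bell polynomials in $g',\dots,g^{(k)}$, one isolates the leading term $(g')^{n}$ and rewrites the equation as a Tumura--Clunie equation $(g')^{n}+\widetilde P(z,g')=-h$, where $\widetilde P$ is a differential polynomial in $w=g'$ of degree $\gamma_{\widetilde P}\le n-1$ whose (meromorphic) coefficients, built from the $p_k$ and the $\pi^{(i)}/\pi$, are small with respect to $g'$ exactly because $\rho(\pi)<\rho(g')=\rho(h)$. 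As $-h$ again solves \eqref{odeh} with $p\le n$ and $g'$ is entire, Theorem~\ref{t1} applies to $w=g'$ with $N(r,\infty,g')=\overline{N}(r,\infty,g')=0$. Its case~(2) gives $T(r,g')\le\frac{p}{n-\gamma_{\widetilde P}}\overline{N}(r,0,g')+S(r,g')$, and since $p\le n$ and $n-\gamma_{\widetilde P}\ge1$ the constant is at most $n$, which is precisely \eqref{res3}.

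The main obstacle is to rule out case~(1) of Theorem~\ref{t1} for $g'$, namely $g'=qe^{\beta}$, which would give $\overline{N}(r,0,g')=S(r,g')$ and make \eqref{res3} fail. Here I would argue in the spirit of Proposition~\ref{Pro}. Because the coefficients of \eqref{odeh} are entire, the ``in particular'' clause of Theorem~\ref{t1}(1) makes $q$ constant, so $g'=ce^{\beta}$ and $h=\sum_{m=0}^{n}C_m e^{m\beta}$, where each $C_m$ is a polynomial combination of the $\pi^{(i)}/\pi$, hence of order $\le\rho(\pi)<\deg\beta$, with $C_n=-c^{n}$. Substituting into \eqref{odeh} and applying Borel's theorem shows that for each $m\ge1$ the coefficient $C_m$ satisfies a linear ODE, monic in its top derivative and with polynomial coefficients, equal to $0$; the monic top term prevents any pole of $C_m$ from being cancelled, so every $C_m$ is in fact entire. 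Inspecting $C_{n-1}$, whose only non-polynomial ingredient is a nonzero constant multiple of $\pi'/\pi$, forces $\pi'/\pi$ to be entire, so $\pi$ is zero-free; being a canonical product, $\pi$ is then constant, whereupon the $m=0$ equation collapses to $C_0=0$ and $r_p=L[C_0]=0$ (with $L$ the operator in \eqref{odeh}), contradicting $r_p\not\equiv0$. Thus case~(1) is impossible and \eqref{res3} holds. The two places demanding care are the regular-growth justification of $\overline{N}(r,0,f)=S(r,h)$ and the bookkeeping of the $e^{m\beta}$-expansion underlying the Borel step.
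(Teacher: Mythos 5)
You follow essentially the same route as the paper's proof: exclude case (1) of Theorem~\ref{Osc} via Proposition~\ref{Pro}; obtain \eqref{res2} by splitting $\overline{N}(r,0,ff')$ and absorbing $\overline{N}(r,0,f)$ using the regular growth of $h$; factor $f=\pi e^{g}$ by Hadamard's factorization; derive the Tumura--Clunie equation \eqref{Tum} for $g'$; and apply Theorem~\ref{t1} to it, ruling out its case (1) with Borel's theorem. Your final paragraph is in fact more detailed than the paper, which dismisses the case $g'=qe^{\beta}$ in a single sentence: your handling of the constant term $C_0=\sum_k p_k\,\pi^{(k)}/\pi$ (each $C_m$, $m\ge 1$, solves a monic linear ODE with polynomial coefficients, hence is entire; $C_{n-1}$ entire forces $\pi'/\pi$ entire, hence $\pi$ constant; then $L(C_0)=0$ contradicts $r_p\not\equiv 0$) correctly addresses a real subtlety that the argument of Proposition~\ref{Pro} alone does not cover, since there the exponential-sum coefficients were polynomials and no constant term appeared.

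However, one step fails as written: the claim that the coefficients $\pi^{(i)}/\pi$ of $\widetilde P$ are small with respect to $g'$ ``exactly because $\rho(\pi)<\rho(g')=\rho(h)$''. An order inequality $\rho(a)<\rho(F)$ does not by itself yield $T(r,a)=o(T(r,F))$; one needs $\rho(a)$ below the \emph{lower} order of $F$, and $g'$ is not known to be of regular growth (regular growth is known only for $h$). The paper closes this gap differently: $T(r,\pi^{(i)}/\pi)=O(\overline{N}(r,0,\pi))+O(\log r)=O(\overline{N}(r,0,f))+O(\log r)=o(T(r,h))+O(\log r)$ by the regular growth of $h$, and then the inequality $T(r,h)\le n\,T(r,g')+O(\log r)$ of \eqref{ineq1} --- which is proved using only proximity functions of the coefficients, $m(r,\pi^{(i)}/\pi)=O(\log r)$, so there is no circularity --- converts this into $S(r,g')$. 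Equivalently, \eqref{ineq1} together with the regular growth of $h$ gives $\mu(g')\ge\mu(h)=\rho(h)>\rho(\pi)$, which is the lower-order comparison your sentence actually needs. This is a one-line repair using facts you already invoke (you appeal to the regular growth of $h$ in your first paragraph), but without it the application of Theorem~\ref{t1} to \eqref{Tum} is not justified.
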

	
	Corollary~\ref{cor} provides more details on the form of solutions of \eqref{L} when  satisfies \( \overline{\lambda}(f) < \rho(h) \). 
	
	The following theorem is known  as the \(\frac{1}{16}\)-theorem.
	
	\begin{thm}[{\cite[Theorem 3.3]{BLL}}]\label{Comp0}
		Let \( h = e^{P} + Q \), where \( P \) is a non-constant polynomial, and \( Q \) is an entire function satisfying \( \rho(Q) < \deg P \). If \eqref{Eq} admits a nontrivial solution \( f \) with \( \lambda(f) < \deg P \), then \( f\) has no zeros and \( Q \) reduces to a polynomial of the form  
		\[
		Q(z) = -\frac{1}{16} P'(z)^2 + \frac{1}{4} P''(z).
		\]
		Moreover, \eqref{Eq} has two linearly independent zero-free solutions.
	\end{thm}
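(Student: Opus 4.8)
The plan is to reduce \eqref{Eq} to a Riccati equation, determine the asymptotics of the logarithmic derivative of $f$ by asymptotic integration, and then back-substitute to pin down $Q$ exactly. First I would record the growth data. Since $\rho(e^{P})=\deg P=:n$ while $\rho(Q)<n$, we have $\rho(h)=n$, and it is classical that every nontrivial solution of \eqref{Eq} is entire of infinite order with hyper-order $\rho_2(f)=\rho(h)=n$. Using the hypothesis $\lambda(f)<n$, write the Hadamard-type factorization $f=\pi e^{g}$, where $\pi$ is the canonical product formed from the zeros of $f$, so that $\rho(\pi)=\lambda(f)<n$, and $g$ is entire. Since $\pi$ has finite order, $\rho_2(f)=\rho_2(e^{g})=\rho(g)$, whence $\rho(g)=n$. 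Setting $v:=f'/f=\pi'/\pi+g'$, equation \eqref{Eq} becomes the Riccati equation
\[
v'+v^{2}=-h=-e^{P}-Q.
\]

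Next I would analyze $v$ by asymptotic integration (Hille's method). On each sector avoiding the finitely many critical rays of $P$, the coefficient satisfies $h=e^{P}(1+o(1))$, and the equation $f''+hf=0$ has a fundamental system behaving like $(-h)^{-1/4}\exp(\pm\int\sqrt{-h})$; since $\sqrt{-h}=ie^{P/2}(1+o(1))$ and $(-h)^{-1/4}=e^{-P/4}(1+o(1))$, one obtains that $v=f'/f$ is asymptotic to $-\tfrac14 P'\pm i e^{P/2}$ up to lower-order terms. The hypothesis $\lambda(f)<n$ is exactly what rules out the genuinely oscillatory member of the fundamental system: a solution realizing the subdominant-plus-dominant mixture carries a full array of zeros clustering along the critical rays and would satisfy $\lambda(f)=n$. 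Hence $f$ must reduce, sector by sector, to a single exponential type, so $f$ is zero-free; thus $\pi$ is constant and $v=g'$.

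I would then substitute the resulting form $g'=-\tfrac14 P'+i e^{P/2}+\psi$ (the sign being fixed by the chosen solution), where $\psi$ is entire and, by the asymptotics, $\psi\to 0$ along every ray. Inserting this into the Riccati equation, the terms of size $e^{P}$ cancel automatically, the terms of size $e^{P/2}$ cancel precisely because the constant part $-\tfrac14 P'$ was chosen so that its double annihilates the derivative term $\tfrac12 P'$, and one is left with
\[
2i\,\psi\,e^{P/2}+\psi'+\psi^{2}-\tfrac12 P'\psi=-Q+\tfrac14 P''-\tfrac1{16}(P')^{2}.
\]
Since $\psi\to 0$ in all directions and $\psi$ is of finite order, a Phragm\'en--Lindel\"of/Liouville argument forces $\psi\equiv 0$; the displayed identity then collapses to $Q=\tfrac14 P''-\tfrac1{16}(P')^{2}$, in particular $Q$ is a polynomial.

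Finally, with $Q$ of this form I would exhibit the two solutions explicitly. A direct computation shows that $f_{\pm}(z)=\exp(-\tfrac14 P(z)\pm i\int e^{P(z)/2}\,dz)$ solve \eqref{Eq}; each is a product of exponentials of entire functions, hence zero-free, and their quotient $f_{+}/f_{-}=\exp(2i\int e^{P/2}\,dz)$ is non-constant, so $f_{+}$ and $f_{-}$ are linearly independent. Any solution with $\lambda(f)<n$ must then be a scalar multiple of one of them, since a nontrivial combination oscillates with $\lambda=n$; this reproves that $f$ itself is zero-free. The hardest part will be the asymptotic-integration step: making the sectorial asymptotics of $v=f'/f$ uniform with controlled error terms across the Stokes/critical rays, rigorously converting the hypothesis $\lambda(f)<n$ into the exclusion of the oscillatory solution, and then running the Phragm\'en--Lindel\"of step that upgrades $\psi\to 0$ to $\psi\equiv 0$.
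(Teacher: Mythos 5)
This statement is not proved in the paper at all: it is Theorem~\ref{Comp0}, quoted as a known result from \cite[Theorem~3.3]{BLL} (Bank--Laine--Langley) and used as a black box to derive Corollary~\ref{Comp}. So there is no in-paper proof to compare yours against; your proposal must be judged on its own terms. In spirit it does follow the classical route of the cited source (Liouville--Green/Hille asymptotic integration, exclusion of dominant--subdominant mixtures via the zero hypothesis, then an exact identity for $Q$), and your algebra is correct: the Riccati substitution $v=-\tfrac14P'+ie^{P/2}+\psi$ does produce the displayed identity, and $\psi\equiv0$ does yield $Q=-\tfrac1{16}(P')^2+\tfrac14P''$, while your verification of the two explicit zero-free solutions is accurate.

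However, as a proof there are genuine gaps. First, your sectorial asymptotics fail on half the plane: you assert $h=e^P(1+o(1))$ away from finitely many critical rays, but in the sectors where $\Re P<0$ one has $e^P\to0$ and $h=Q(1+o(1))$, where $Q$ is a priori an arbitrary entire function of order less than $\deg P$ --- not a polynomial (that it is one is part of the \emph{conclusion}) --- possibly with infinitely many zeros and no usable asymptotics; asymptotic integration gives nothing there, so the claim ``$\psi\to0$ along every ray'' is unsupported precisely on the rays you need to bridge the Phragm\'en--Lindel\"of argument (and those sectors have opening about $\pi/\deg P$, the borderline case where Phragm\'en--Lindel\"of for a function of order $\deg P$ can fail even with decay on the boundary rays). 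Second, even in the sectors where $\Re P>0$, Liouville--Green asymptotics control \emph{relative} errors: they give $v=-\tfrac14P'\pm ie^{P/2}(1+o(1))$, i.e.\ $\psi=o(|e^{P/2}|)$, not $\psi=o(1)$, so the absolute decay needed for your Liouville step does not follow without a further integrating-factor/Gronwall analysis of the first-order equation satisfied by $\psi$. Third --- and this is the heart of the theorem --- the assertions that a dominant--subdominant mixture forces $\lambda(f)=\deg P$, that this lets a single solution be asymptotic to one pure exponential in \emph{every} dominant sector, and that the sign $\pm$ is globally consistent across sectors (no Stokes switching), are exactly what Bank--Laine--Langley must work to establish; you invoke them as known facts. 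You flag this step as ``the hardest part,'' but with it assumed, the proposal is a roadmap along the correct classical route rather than a proof.
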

	
	The following consequence of Corollary~\ref{cor} provides additional information on the zero-free solutions in Theorem~\ref{Comp0}
	
	\begin{corollary}\label{Comp}
		The zero-free solutions \( f \) in Theorem~\ref{Comp0} are of the form \( f = e^{g} \), where \( g \) is an entire function satisfying \( \lambda(g') = \lambda(g) = \deg P \).
	\end{corollary}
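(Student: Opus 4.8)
The plan is to read the setting of Theorem~\ref{Comp0} as the special case $n=2$, $p_1\equiv 0$ of \eqref{L}, and to feed the zero-free solutions it produces into Corollary~\ref{cor}. First I would check that the hypotheses of Corollary~\ref{cor} hold. Since the theorem forces $Q$ to be the polynomial $-\tfrac{1}{16}(P')^2+\tfrac14 P''$, a direct computation gives $h'-P'h=(e^P+Q)'-P'(e^P+Q)=Q'-P'Q$, so $h=e^P+Q$ solves \eqref{odeh} with $p=1\le 2=n$, polynomial coefficients, and $r_0=-P'\not\equiv 0$, $r_p=Q'-P'Q$. One checks $Q\not\equiv 0$ (when $\deg P\ge 2$ its leading term comes from $(P')^2$, and when $\deg P=1$ it is a nonzero constant), and since $e^P$ is transcendental, $Q'-P'Q\equiv 0$ would force $Q=ce^P$ and hence $Q\equiv 0$; thus $r_p\not\equiv 0$. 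As $f$ is zero-free, $\overline N(r,0,f)\equiv 0$, so $\overline\lambda(f)<\deg P=\rho(h)$, and Corollary~\ref{cor} applies and delivers \eqref{res2}, which here reads $T(r,h)\le 2\,\overline N(r,0,f')+S(r,h)$.

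Next, because $f$ is a zero-free entire function I would write $f=e^{g}$ with $g$ entire; this is the asserted form. To fix the order of $g$, I would invoke the identity $\rho_2(f)=\rho(h)$ recorded after \eqref{L}, together with the standard fact $\rho_2(e^{g})=\rho(g)$ (applicable since $\rho(f)=\infty$, so $g$ is transcendental), obtaining $\rho(g)=\rho(h)=\deg P=:d$. In particular $\rho(g')=d$, which already gives the upper bounds $\lambda(g')\le d$ and $\lambda(g)\le d$.

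For the equality $\lambda(g')=d$, I would differentiate: $f'=g'e^{g}$, and since $e^{g}$ is zero-free the zeros of $f'$ are exactly those of $g'$, so $\overline N(r,0,f')=\overline N(r,0,g')$. Substituting into \eqref{res2} gives $\overline N(r,0,g')\ge \tfrac12(1-o(1))T(r,h)$ off an exceptional set of finite measure; since $T(r,h)$ has order $d$, this forces $\overline\lambda(g')\ge d$, whence $\lambda(g')=\overline\lambda(g')=d=\deg P$.

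Finally I would establish $\lambda(g)=d$ by contradiction. Suppose $\lambda(g)<d$. Then $g$ is entire of finite order $d$ with convergence exponent of zeros strictly below $d$, so Hadamard's factorization gives $g=\Pi e^{\beta}$, where $\Pi$ is the canonical product over the zeros of $g$ with $\rho(\Pi)=\lambda(g)<d$ and $\beta$ is a polynomial of degree $d$. Differentiating, $g'=(\Pi'+\Pi\beta')e^{\beta}$, so the zeros of $g'$ coincide with those of $\Pi'+\Pi\beta'$, a nonzero entire function (nonzero because $g$ is nonconstant) of order $\le\rho(\Pi)=\lambda(g)<d$, as $\beta'$ is merely a polynomial. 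Hence $\lambda(g')\le\rho(\Pi'+\Pi\beta')<d$, contradicting $\lambda(g')=d$; therefore $\lambda(g)=d=\deg P$. I expect the last step to be the main obstacle: the inequality coming from Corollary~\ref{cor} controls only the zeros of $g'$, and transferring that information back to the zeros of $g$ itself is precisely what the Hadamard-factorization argument supplies, and is where the equality $\lambda(g)=\deg P$ genuinely originates.
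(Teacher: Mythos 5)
Your proof is correct, and it follows the paper's overall skeleton---verify that \( h=e^P+Q \) solves a first-order equation \eqref{odeh} with \( r_1=Q'-P'Q\not\equiv 0 \), apply Corollary~\ref{cor}, obtain \( \lambda(g')=\deg P \), then transfer this to \( g \)---but both key steps are executed by genuinely different means. For \( \lambda(g') \), the paper invokes the second conclusion \eqref{res3} of Corollary~\ref{cor}, which for \( n=2 \) reads \( T(r,g')\le 2\overline{N}(r,0,g')+S(r,g') \) and gives \( \rho(g')\le\lambda(g') \) at once; you instead route through \eqref{res2} and the identity \( \overline{N}(r,0,f')=\overline{N}(r,0,g') \), plus an order/regular-growth argument for \( T(r,h) \)---slightly longer, but sound. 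The real divergence is the transfer from \( \lambda(g') \) to \( \lambda(g) \): you argue by contradiction via Hadamard factorization (if \( \lambda(g)<d \) then \( g=\Pi e^{\beta} \) with \( \deg\beta=d \) and \( \rho(\Pi)<d \), so the zeros of \( g'=(\Pi'+\Pi\beta')e^{\beta} \) have exponent of convergence \( <d \)), whereas the paper proves the counting-function inequality \( N(r,0,g')\le N(r,0,g)+O(\log r) \), valid for any entire \( g \) of finite order, from \( m(r,1/g)\le m(r,1/g')+O(\log r) \) and the first main theorem, and then closes the chain \( \rho(g)=\rho(g')\le\lambda(g')\le\lambda(g)\le\rho(g) \). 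Your factorization argument is more elementary and exploits the specific dichotomy \( \lambda(g)<\rho(g) \), yielding only an order-level statement; the paper's lemma is sharper and reusable as a general fact about derivatives of finite-order entire functions. Incidentally, your explicit checks that \( Q\not\equiv 0 \) and \( Q'-P'Q\not\equiv 0 \) (so that \( r_p\not\equiv 0 \) in Corollary~\ref{cor}) are more careful than the paper, which leaves them implicit and moreover contains a typo, writing \( Q'-QP \) for the right-hand side of the first-order equation.
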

	
	The following example was given in \cite{HILT} and illustrates Corollary~\ref{Comp}.
	\begin{example}
		The functions \( f_j(z) = e^{g_j(z)} \), where  
		\[
		g_j(z) = (-1)^j 2i \exp \left( \frac{z}{2} \right) - \frac{z}{4}, \quad j = 1, 2,
		\]
		are linearly independent solutions of  
		\[
		f'' + \left( e^z - \frac{1}{16} \right) f = 0.
		\]
		Clearly, \( \overline{\lambda}(g_j') = \overline{\lambda}(g_j) = 1 \).
	\end{example}

	\section{Proofs}\label{sec:proofs}
	\subsection{Proof of Theorem~\ref{t1}.}

	In order to prove Theorem~\ref{t1}, we need to define the following notations. For $a\in\widehat{\mathbb{C}}$ and $k\in\mathbb{N}$, we denote the  counting function of the $a$-points of a meromorphic $f$ of multiplicity at most $k$ by $n_{k)}(r,a,f)$, each $a$-point being counted according to its multiplicity, and $N_{k)}(r,a,f)$ being its corresponding integrated counting function. Further, $n_{(k}(r,a,f)$ denotes the counting function of the $a$-points of $f$
	of multiplicity at least $k$, again each $a$-point being counted according to its multiplicity and $N_{(k}(r,a,f)$ being its corresponding integrated counting function. Then $N(r,a,f)=N_{k)}(r,a,f)+N_{(k+1}(r,a,f)$. We also make use of the notation $n_{(k)}(r,a,f)$ for the counting function of the $a$-points of $f$
	of multiplicity exactly $k$.
	
	We need the following lemmas.
	\begin{lemma}\label{zerospoles}
		Let $g$ be a non-trivial meromorphic solution of 
		$$
		g^{(p)}+A_{p-1} g^{(p-1)}+\cdots + A_0 g=0,
		$$
		where $A_s$ ($s=0,\dots,p-1$) are meromorphic functions. Then
		\begin{eqnarray*}
			n(r,\infty, f) &\le & \sum_{s=0}^{p-1} n_{(p-s} (r,\infty,A_s), \\
			n_{(p} \left(r,0,f \right) &\le &\sum_{s=0}^{p-1} n_{(p-s} (r,\infty,A_s).
		\end{eqnarray*}
		In particular, if all the coefficients are entire functions, then $f$ is entire with zeros of multiplicity at most $p-1$.
	\end{lemma}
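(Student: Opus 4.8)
The plan is to argue locally, point by point, by substituting Laurent expansions into the differential equation and comparing orders of vanishing on the two sides. Throughout I write $\mathrm{ord}_{z_0}$ for the order of a meromorphic function at $z_0$, positive at a zero and negative at a pole, and I record the elementary fact that if $\mathrm{ord}_{z_0}f=\nu$ then $\mathrm{ord}_{z_0}\!\big(f^{(s)}/f\big)=-s$ whenever the falling factorial $\nu(\nu-1)\cdots(\nu-s+1)$ is nonzero; in particular this holds at every pole of $f$ for all $s$, and at a zero of order $\ge s$.

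For the first inequality I fix a pole $z_0$ of $f$ of order $m\ge 1$ and divide the equation by $f$ to obtain
\[
\frac{f^{(p)}}{f}+\sum_{s=0}^{p-1}A_s\,\frac{f^{(s)}}{f}=0 .
\]
At $z_0$ the term $f^{(p)}/f$ has a pole of order exactly $p$, since its principal coefficient $(-m)(-m-1)\cdots(-m-p+1)$ is nonzero. Were every $A_s$ analytic at $z_0$, the term $A_s f^{(s)}/f$ would have a pole of order at most $s\le p-1$, and the sum could not cancel the order-$p$ pole of $f^{(p)}/f$. Tracking orders, cancellation of this pole forces $\mathrm{ord}_{z_0}A_s\le-(p-s)$ for at least one index $s$; that is, $z_0$ is a pole of some $A_s$ of multiplicity at least $p-s$. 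Assigning to each pole of $f$ such a pole of the corresponding $A_s$ and summing over the poles of $f$ gives the bound $n(r,\infty,f)\le\sum_{s=0}^{p-1}n_{(p-s}(r,\infty,A_s)$.

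For the second inequality I use the equation as $f^{(p)}=-\sum_{s=0}^{p-1}A_s f^{(s)}$ and fix a zero $z_0$ of $f$ of order $m\ge p$. Here $\mathrm{ord}_{z_0}f^{(p)}=m-p$ exactly, because the leading coefficient $m(m-1)\cdots(m-p+1)$ is nonzero precisely when $m\ge p$, which is the reason the bound concerns zeros of multiplicity at least $p$. If every $A_s$ were analytic at $z_0$, each summand would satisfy $\mathrm{ord}_{z_0}\!\big(A_s f^{(s)}\big)\ge m-s\ge m-(p-1)>m-p$, forcing the right-hand side to vanish to order strictly greater than $m-p$ and contradicting $\mathrm{ord}_{z_0}f^{(p)}=m-p$. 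More precisely, to keep $\mathrm{ord}_{z_0}\!\big(A_s f^{(s)}\big)\le m-p$ for some $s$ one needs $\mathrm{ord}_{z_0}A_s\le-(p-s)$, a pole of $A_s$ of multiplicity at least $p-s$; summation then yields $n_{(p}(r,0,f)\le\sum_{s=0}^{p-1}n_{(p-s}(r,\infty,A_s)$. The last assertion is immediate: if all coefficients are entire then both right-hand sides vanish, so $f$ has no poles and no zeros of multiplicity $\ge p$.

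The main delicacy I anticipate is the order bookkeeping under possible cancellation among the lower-order Laurent coefficients: one must verify that the comparison is governed by the genuinely leading terms, i.e. by the nonvanishing falling factorials, so that the contribution of the $A_s$-poles is not masked by fortuitous cancellation in subleading coefficients. This is exactly what pins the threshold at multiplicity $p$ for zeros (the factorial $m(m-1)\cdots(m-p+1)$ becoming nonzero) and produces the shifted requirement that the relevant coefficient pole of $A_s$ have order at least $p-s$.
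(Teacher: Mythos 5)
Your proof is correct and takes essentially the same approach as the paper's: a pointwise comparison of orders showing that at every pole of $f$, and at every zero of $f$ of multiplicity at least $p$, some coefficient $A_s$ must have a pole of multiplicity at least $p-s$ (the paper merely normalizes differently, dividing the equation by $g^{(p)}$ so that each $g^{(s)}/g^{(p)}$ has a zero of order exactly $p-s$ at such points, whereas you divide by $f$ for the poles and use the undivided equation for the zeros). One remark: your final summation step shares the same looseness as the paper's own ``thus the conclusions hold'' --- the pointwise statement bounds the number of \emph{distinct} poles and high-multiplicity zeros of $f$, while the displayed inequalities, under the paper's convention that $n$ and $n_{(k}$ count points according to multiplicity, do not literally follow from it (nor are they needed in that strength for the application, where only finiteness of the sets of poles and zeros is used).
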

	\begin{proof}
		Write the equation in the form
		$$
		1+\sum_{s=0}^{p-1}A_{s} \frac{g^{(s)}}{g^{(p)}} =0. 
		$$
		If $z_0$ is a pole of any multiplicity or a zero of multiplicity at least $p$ of $g$, then $z_0$ is a zero of multiplicity $p-s$ of $\frac{g^{(s)}}{g^{(p)}}$, then $z_0$ must be a pole of multiplicity at least $p-s$ of $A_s$. Thus, the conclusions hold.
	\end{proof}
	
	
	\begin{lemma}[\cite{L}]\label{Clunie}
		Let $f$ be a transcendental meromorphic solution of
		$$
		f^{n}Q^*(z,f)=Q(z,f),
		$$
		where $Q^*(z,f)$ and $Q(z,f)$ are polynomials in $f$ and its derivatives with meromorphic coefficients, say $\left\lbrace a_{\lambda}:\lambda \in I\right\rbrace $, such that $m(r,a_{\lambda})=S(r,f)$ for all $\lambda \in I$. If $\gamma_Q\leq n$, then
		$$
		m(r,Q^*(z,f))=S(r,f).
		$$
	\end{lemma}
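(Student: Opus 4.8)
The plan is to bound the proximity function $m(r,Q^*(z,f))$ directly, by splitting the circle $|z|=r$ into the two sets $E_1=\{\theta:\,|f(re^{i\theta})|\le 1\}$ and $E_2=\{\theta:\,|f(re^{i\theta})|>1\}$, and controlling the integrand $\log^+|Q^*|$ on each piece by a combination of the coefficient proximities $m(r,a_\lambda)$ and the logarithmic-derivative proximities $m(r,f^{(j)}/f)$. All of these are $S(r,f)$: the former by hypothesis, the latter by the lemma on the logarithmic derivative. The key bookkeeping device is to write every monomial $f^{m_0}(f')^{m_1}\cdots(f^{(k)})^{m_k}$ of total degree $d=m_0+\cdots+m_k$ in the factored form $f^{d}\prod_{j\ge1}\bigl(f^{(j)}/f\bigr)^{m_j}$, isolating a single power of $f$ from a product of logarithmic derivatives.

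On $E_2$ I would first use the equation to replace $Q^*$ by $Q/f^{n}$. Every monomial of $Q$ has total degree $d\le n$, so after the factorization and division by $f^{n}$ it carries a factor $|f|^{d-n}$ with $d-n\le0$; since $|f|>1$ on $E_2$, this factor is $\le1$, and hence $|Q^*|$ is dominated there by a finite sum of products $|a_\lambda|\prod_{j\ge1}|f^{(j)}/f|^{m_j}$. On $E_1$ I would instead keep $Q^*$ itself: writing its monomials in the same factored form, the surviving power of $f$ is the non-negative degree $d\ge0$, and since $|f|\le1$ on $E_1$ we again have $|f|^{d}\le1$, so $|Q^*|$ is dominated on $E_1$ by a similar finite sum of products $|a_\lambda|\prod_{j\ge1}|f^{(j)}/f|^{m_j}$.

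Taking $\log^+$, using the standard subadditivity inequalities for $\log^+$ of sums and products, and integrating over $E_1\cup E_2$, every resulting contribution is a bounded linear combination of terms $m(r,a_\lambda)$ and $m(r,f^{(j)}/f)$, all of which are $S(r,f)$. This yields $m(r,Q^*(z,f))=S(r,f)$, as required. The only delicate point — and the single place where the degree hypothesis $\gamma_Q\le n$ enters — is the choice of the threshold value $1$ in the splitting: it is precisely this choice that renders the effective power of $f$ non-positive on $E_2$ (through the equation) and non-negative on $E_1$ (directly), so that in both cases the corresponding power of $|f|$ is harmlessly bounded by $1$. Everything else is routine Nevanlinna bookkeeping.
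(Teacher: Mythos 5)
Your argument is correct, and it is precisely the classical proof of Clunie's lemma found in the reference the paper cites for this statement (see \cite{L}, Lemma~2.4.2); the paper itself gives no proof, deferring entirely to that source. Your splitting of the circle at the threshold $|f|=1$, using the equation $Q^*=Q/f^{n}$ together with $\gamma_Q\le n$ on the set where $|f|>1$, and the non-negativity of monomial degrees on the set where $|f|\le 1$, followed by the lemma on the logarithmic derivative and the coefficient hypothesis $m(r,a_\lambda)=S(r,f)$, is exactly that standard argument, so there is no gap to report.
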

	\begin{rem}\label{rem:rationals}
		By using a similar method as in \cite[p. 40]{L}, we can prove that
		$$
		m\left(r, Q^*(z, f)\right)=O(\log r)
		$$
		when $Q^*(z, f)$ and $Q(z, f)$ are differential polynomials in $f$ with rational coefficients.
	\end{rem}

	\begin{proof}[Proof of Theorem~\ref{t1}]
		Let 
		\begin{equation*}
			L(f) := h^{(p)} + r_{p-1}(z) h^{(p-1)} + \cdots + r_1(z) h' + r_0(z) h. 
		\end{equation*}
		From \eqref{nde} and \eqref{odeh}, we have
		$$
		L(f^n) + L(P(z,f)) = L(h)=r_p(z),
		$$
		hence
		\begin{equation}\label{p1}
			(f^n)^{(p)}+r_{p-1}(z)(f^n)^{(p-1)} + \cdots + r_1(z)(f^n)' +r_0(z)f^n = r_p(z)- L(P(z,f)).
		\end{equation}
		Set $Q(z,f)= r_p(z)- L(P(z,f))$, and for an integer $j\ge 1$ define 
		\begin{equation}\label{p2}
			\varphi_j =\left( \frac{(f^n)^{(p)}}{f^n} + r_{p-1}(z) \frac{(f^n)^{(p-1)}}{f^n} + \cdots + r_1(z) \frac{(f^n)'}{f^n} +r_0(z) \right) f^j.
		\end{equation} 
		
		1) If $\varphi_j \equiv 0$, then $L(f^n)=0$ which yields $f$ is of finite order. Since $n\ge p$, i.e., $f^n$ has only zeros of multiplicity at least $n\ge p$, it follows from Lemma~\ref{zerospoles} that $f^n$ has at most finitely many poles and zeros, and so is $f$. Hence, $f$ must have the form $f=qe^\alpha$, where $q$ is non-zero rational and  $\alpha$ is non-constant	polynomial. Substituting this form into \eqref{nde}  we get
		$$
		h(z)= q^n e^{n\alpha(z)}+ \sum_{s=0}^{\gamma_P} \beta_s e^{s \alpha(z)}, 
		$$
		where $\beta_j$ depends on the coefficients of $P(z,f)$, $q$ and $\alpha$ and their derivatives.  By applying Mohon'ko's Lemma \cite[p.~29]{L}, we obtain
		$$
		T(r,h)= nT(r,e^\alpha)  + S(r,f) = nT(r,f)  + S(r,f).
		$$
		
		If $r_s$ ($s=0,\cdots, p-1$) are polynomials, then $f^n$ satisfies $L(f^n)=0$ and it follows from Lemma~\ref{zerospoles} that $f^n$ is entire with only zeros of multiplicity at most $p-1$ and that is not possible since $n\ge p$. Thus, $f^n$ does not have zeros, and then $q$ must be a constant.

		2) If $\varphi_j \not\equiv 0$, then the equation \eqref{p1} will be written as 
		\begin{equation} \label{p3}
			f^{n-j} \varphi_j = Q(z,f),
		\end{equation}
		Using the lemma on the logarithmic derivative, yields obviously that the coefficient of $f^j$ in  \eqref{p2} has small proximity function. If we choose $j$ such that $n\ge j+1$ and $\gamma_Q = \gamma_P \le n-j$, then it follows from Lemma~\ref{Clunie} that $m(r,\varphi_j) = S(r,f)$. Using this fact together with the first main theorem of Nevanlinna, we obtain
		\begin{equation}\label{p4}
			j m(r,1/f) \le m(r,1/\varphi_j)+ S(r,f) = N(r,\infty,\varphi_j) - N(r,0,\varphi_j) + S(r,f).
		\end{equation}
		
		Since any pole of \( f \) is a pole of multiplicity \( p \) of the coefficient of \( f^j \) in \eqref{p2}, it follows that any pole of \( f \) of multiplicity \( k \), which is not a pole or zero of \( r_0, r_1, \dots, r_{p-1} \), is a pole of \( \varphi_j \) of multiplicity \( p + jk \).  
		Moreover, with a more careful inspection and using the fact that \( n \geq p \), we observe that any zero of \( f \), which is not a pole or zero of \( r_0, r_1, \dots, r_{p-1} \), and has multiplicity  \( k \leq \left\lfloor \frac{p-1}{j} \right\rfloor = \left\lceil \frac{p}{j} \right\rceil - 1 \), is a pole of \( \varphi_j \) of multiplicity \( p - jk \).  
		Thus, the poles of \( \varphi_j \) arise only from the poles and zeros of \( f \), and from the poles of the coefficients. Therefore,
		\begin{eqnarray}\label{p5}
			N(r,\infty, \varphi_j) &= &p \overline{N}_{\left.\lceil\frac{p}{j}\rceil -1 \right)}(r,0,f) - j N_{\left.\lceil\frac{p}{j}\rceil -1 \right)}(r,0,f)\nonumber\\
			& +& p \overline{N}(r,\infty,f) + j N(r,\infty,f) + O(\log r).
		\end{eqnarray}
		
		Now, if a zero of multiplicity $k \ge \lfloor \frac{p}{j} \rfloor + 1$ of $f$, which is not a zero or a pole of $r_0,r_1,\dots, r_{p-1}$, is a zero of $\varphi_j$ of multiplicity $jk-p$, hence
		\begin{equation}\label{p6}
			j N_{\left(\lfloor \frac{p}{j} \rfloor + 1 \right.}(r,0,f) -p \overline{N}_{\left(\lfloor \frac{p}{j} \rfloor + 1 \right.}(r,0,f) \le N(r,0,\varphi_j).
		\end{equation}
		
		From \eqref{p4}--\eqref{p6}, we obtain
		\begin{align*}
			j T(r,f) =& j N(r,0,f) + j m(r,f)+O(1) \\
			\le & j N_{\left.\lfloor \frac{p}{j} \rfloor \right)}N(r,0,f) +j N_{\left(\lfloor \frac{p}{j} \rfloor+ 1 \right.}(r,0,f) + N(r,\infty,\varphi_j) - N(r,0,\varphi_j) + S(r,f) \\
			\le & j N_{\left.\lfloor \frac{p}{j} \rfloor \right)}N(r,0,f) + p \overline{N}_{\left(\lfloor \frac{p}{j} \rfloor + 1 \right.}(r,0,f)+N(r,\infty,\varphi_j)+ S(r,f) \\
			=& j N_{\left.\lfloor \frac{p}{j} \rfloor \right)}N(r,0,f) + p \overline{N}_{\left(\lfloor \frac{p}{j} \rfloor + 1 \right.}(r,0,f)+  p \overline{N}_{\left.\lceil\frac{p}{j}\rceil -1 \right)}(r,0,f) - j N_{\left.\lceil\frac{p}{j}\rceil -1 \right)}(r,0,f) +\\
			& p\overline{N}(r,\infty,f) + j N(r,\infty,f) + S(r,f)\\
			=&  p \overline{N} (r,0,f) + p \overline{N}(r,\infty, f) + j N(r,\infty, f) + S(r,f),
		\end{align*}
		therefore, 
		$$
		T(r,f) \le \frac pj \overline{N} (r,0,f) + \frac pj \overline{N}(r,\infty,f) + N(r,\infty, f) + S(r,f).
		$$
		This last inequality is valid and more sharper if we take $$j=\max\{s\in\mathbb{N}:    s\le n-1 \; \text{and} \;  s\le n-\gamma_P\} = \min\{n-1; n-\gamma_P\}=n-\gamma_P$$ since we assume that $\gamma_P \ge 1$. Thus, the conclusion in Theorem~\ref{t1}(2) follows.
	\end{proof}

	\subsection{Proof of Theorem~\ref{Osc}.}
	Dividing both sides of \eqref{L} by \( f \)  and using \cite[Lemma~3.5]{H} yield
	\begin{equation} \label{Q}
		g^n + Q(z, g) = -h,
	\end{equation}  
	where \( g = f'/f \) and 
	$$
	Q(z,g)= p_{n-1}(z)g^{n-1}+\frac{n(n-1)}{2} g^{n-2} g^{\prime} +Q_{n-2}(g),
	$$
	where $Q_{n-2}(g)$ is a polynomial in $g$ and its derivatives with polynomial coefficients and of total degree $\le n-2$.  In particular we have $ \gamma_Q = n-1 $. From \eqref{Q}, we deduce the two cases, corresponding to those in Theorem~\ref{t1}.
	
	\paragraph{\it Case~$ 1 $.}  We have \( g = e^{\alpha} \), where \(\alpha\) is a non-constant polynomial and
	\begin{equation*}\label{e1}
		T(r,h) = nT(r,e^{\alpha}) + S(r,e^{\alpha}),
	\end{equation*}
	from which we conclude  \(\deg(\alpha) = \rho(h)\).
	By letting $ G $ be a primitive of $ e^{\alpha} $, we obtain from \( e^{\alpha} =g = f'/f \), that $ f = e^G $.

	\paragraph{\it Case~$ 2 $.} The function $g$ satisfies the inequality 
	\begin{align}\label{equ:F0}
		m(r, g) &\leq  \frac{p}{n - \gamma_Q} \left( \overline{N}(r,\infty, g) + \overline{N}(r, 0,g) \right) + S(r, g) =  p \overline{N}\left(r, 0,{ff'} \right)+ S(r, g),
	\end{align}
	since \(\gamma_Q = n - 1\) and the zeros of \(ff'\) are precisely the poles of \(g\) (zeros of \(f\)) together with the zeros of \(g\) (points where \(f' = 0\) but \(f \neq 0\)).
	From \eqref{Q} we have  
	\begin{equation}\label{F1}  
		T(r,h) = m(r,h) \leq n m(r,g) + S(r,g),
	\end{equation}  
	which shows, in particular, that $ g $ is a transcendental meromorphic function. Additionally, we have  
	\begin{align*}  
		nm(r,g) = m(r,g^n)  \leq m(r,h) + (n-1)m(r,g) + S(r,g),
	\end{align*}  
	from which we obtain 
	\begin{equation}\label{F2}
		T(r,g) \leq T(r,h) + N(r,\infty,g) + S(r,g) =T(r,h) + \overline{N}\left(r,0,f\right) +S(r,g).  
	\end{equation}
	%
	%
	%
	%
	Combining \eqref{equ:F0}, \eqref{F1} and \eqref{F2} yields \eqref{res}.
	%
	
	If $ \overline{\lambda}(f) < +\infty $, then $ \rho(g) < +\infty $. From Remark~\ref{rem:finitecoeff}, $ S(r,g) $ in  \eqref{equ:F0} is in fact $ O(\log r) $ without an exceptional set. In addition, $ S(r,g) $ in \eqref{F1} is $ O(\log r) $ without an exceptional set. From these two estimates, we obtain \eqref{res1}
	
	\subsection{Proof of Corollary~\ref{cor}}
	
	We already know  from Proposition~\ref{Pro} that \( f \) cannot take the form \( f = e^G \), where \( G' = e^\alpha \) and \( \alpha \) is a polynomial, if \( r_p \not\equiv 0 \).
	Consequently, \( f \) satisfies \eqref{res}. Since \( h \) is of regular growth \cite[p.~108]{Valiron}, it follows \( \overline{\lambda}(f) < \mu(h) \), where $ \mu(h) $ is the lower order of growth of $ h $. Hence,  
	\[
	\overline{N}\left(r,0,{f}\right) = o(T(r, h)), \quad r \to \infty,
	\]  
	from which \eqref{res2} follows immediately.
	
	Now, by Hadamard's factorization theorem, we have \( f = \pi e^g \), where \( \pi \) and \( g \) are entire functions, where \( \lambda(f) = \rho(\pi) < \rho(h) = \rho_2(f) = \rho(g).\)
	Substituting \( f = \pi e^g \) into \eqref{L} results in 
	\begin{eqnarray}\label{Tum}
		(g')^n + Q(z, g') = -h,
	\end{eqnarray}  
	where \( Q(z, g') \) is a polynomial in \( g' \) and its derivatives, with degree \( \gamma_Q \leq n-1 \) and coefficients involving \( \pi^{(k)}/\pi \). Note that  
	\begin{eqnarray}  
		T(r, h) = m(r, h) \leq n m(r, g') + O(\log r)  
		= n T(r, g') + O(\log r).  \label{ineq1}
	\end{eqnarray}  
	On the other hand,
	\begin{eqnarray*}
		T\left( r,\frac{\pi^{(k)}}{\pi}\right) &=&O\left( \overline{N}\left( r,0,\pi\right)\right)  +O(\log r)\\
		&=&O\left( \overline{N}\left( r,0,f\right) \right) +O(\log r)\\
		&=& o\left( T(r,h)\right) +O(\log r),\quad r\to \infty.
	\end{eqnarray*}
	Combining this with \eqref{ineq1}, we obtain \( T(r, \pi^{(k)}/\pi) = S(r, g') \). 
	
	We can now apply Theorem~\ref{t1} to equation \eqref{Tum}. Either \( g' = e^{\alpha} \), where \( \alpha \) is a non-constant polynomial, or \( g' \) satisfies \eqref{res3}. However, the case \( g' = e^{\alpha} \) is not possible; substituting this into \eqref{Tum} and then into \eqref{odeh} leads to a contradiction with \( r_p \not\equiv 0 \). Thus, we conclude that \( g' \) satisfies \eqref{res3}.
	
	\subsection{Proof of Corollary~\ref{Comp}}
	From Theorem~\ref{Comp0}, we know that \( f = e^g \), where \( g \) is an entire function satisfying 
	\begin{eqnarray}\label{rho}
		\rho(g) = \rho_2(f) = \deg(P),
	\end{eqnarray} 
	and \( Q \) must be a nonzero polynomial. Hence, \( h \) satisfies the non-homogeneous linear differential equation  
	\[
	h' - P'(z)h = Q'(z) - Q(z)P(z).
	\]
	Moreover, by applying Corollary~\ref{cor}, we see that \( g' \) satisfies \eqref{res3} for \( n = 2 \), which implies that \( \rho(g') \leq \lambda(g') \).

	Next, we show that \( \lambda(g') \leq \lambda(g) \). Since \( g \) is of finite order, we have 
	\[
	m\left( r,\frac{1}{g}\right) \leq m\left( r,\frac{1}{g'}\right)+O(\log r).
	\]
	Hence,
	\[
	T(r,g) - N(r,0,g) \leq T(r,g') - N(r,0,g') + O(\log r).
	\]
	That is,
	\begin{eqnarray}
		T(r,g) + N(r,0,g') &\leq& T(r,g') + N(r,0,g) + O(\log r) \nonumber\\
		&\leq& T(r,g) + N(r,0,g) + O(\log r). \nonumber
	\end{eqnarray}
	Therefore,
	\[
	N(r,0,g') \leq N(r,0,g) + O(\log r).
	\]
	It follows that
	\[
	\rho(g) = \rho(g') \leq \lambda(g') \leq \lambda(g) \leq \rho(g).
	\]
	Combining this with \eqref{rho} completes the proof of Corollary~\ref{Comp}.


\bigskip
E-mail: \texttt{amine.zemirni@nhsm.edu.dz}

\textsc{National Higher School of Mathematics, Scientific and Technology Hub of Sidi Abdellah, P.O.~Box 68, Algiers 16093, Algeria}

\medskip
E-mail: \texttt{z.latreuch@squ.edu.om}

\textsc{Sultan Qaboos University,
	College of Science,
	Department of Mathematics,
	P.~O.~Box 36, Al-Khod 123,
	Muscat,
	Sultanate of Oman}

\end{document}